\newtheorem{theorem}{Theorem}[section]
\newtheorem{definition}[theorem]{Definition}
\newtheorem{lemma}[theorem]{Lemma}
\newtheorem{proposition}[theorem]{Proposition}
\newtheorem{corollary}[theorem]{Corollary}
\def\i{\mathfrak{i}}
\def\j{\mathfrak{j}}
\def\U{\mathfrak{U}}
\def\Q{\mathcal{Q}}
\def\F{\mathfrak{F}}
\def\K{\kappa}
\def\P{\mathcal{P}^n}
\def\hbu{H_{b\Q}}
\begin{document}

\title{Extending polynomials in maximal and minimal ideals}

\thanks{The first author was partially supported by ANPCyT PICT 05 17-33042, UBACyT Grant X038 and ANPCyT PICT 06 00587. The second author was partially supported by ANPCyT PICT 05 17-33042, UBACyT Grant X863 and a Doctoral fellowship from CONICET}

\author{Daniel Carando}

\author{Daniel Galicer}

\address{Departamento de Matem\'{a}tica - Pab I,
Facultad de Cs. Exactas y Naturales, Universidad de Buenos Aires,
(1428) Buenos Aires, Argentina, and CONICET.}
\email{dcarando@dm.uba.ar} \email{dgalicer@dm.uba.ar}

\begin{abstract}
Given an homogeneous polynomial on a Banach space $E$ belonging to some maximal or minimal polynomial ideal, we consider its iterated extension to an ultrapower of $E$ and prove that this extension  remains in the ideal and has the same ideal norm. As a consequence, we show that the Aron-Berner extension is a well defined isometry for any maximal or minimal ideal of homogeneous polynomials. This allow us to obtain symmetric versions of some basic results of the metric theory of tensor products.
\end{abstract}

\subjclass[2000]{46G25, 46A32, 46B28, 47H60}
\keywords{Extension of polynomials, polynomial ideals, symmetric tensor products of Banach spaces}

\maketitle

\section{Introduction}

Aron and Berner showed in~\cite{AroBer78} how to extend continuous polynomials (and some holomorphic functions) defined on a Banach space $E$ to the bidual $E''$. Some time later, Davie and Gamelin~\cite{DavieGamelin98} proved that this extension preserves the norm. This fact is crucial to show that some holomorphic functions defined on the unit ball of $E$ can be extended to the ball of $E''$. A natural question is whether a polynomial ideal is closed under the Aron-Berner extension and, also,  if the ideal norm is preserved by this extension. This is easy for nuclear and approximable polynomials, and it is also known to hold, for example, for integral polynomials~\cite{CarandoZalduendo99}, for extendible polynomials~\cite{Carando99}, and for the ideal of polynomials that are weakly continuous on bounded sets \cite{Moraes84} among others. However, some polynomial ideals are not closed under Aron-Berner extension (for example, the ideal of weakly sequentially continuous polynomials).

Floret and Hunfeld showed that there is an extension, the so called uniterated Aron-Berner extension, which is an isometry for maximal polynomial ideals \cite{Flo02(Ultrastability)}.
Although it is easy to prove properties of this uniterated extension, it is hard to compute, since its definition depends on an ultrafilter. On the other hand, the (iterated) Aron-Berner extension is not only easier to compute, but also has a simple characterization that allows to check if a given extension of a polynomial is actually its Aron-Berner extension~\cite{Zalduedo90}. Moreover, this extension is more widely used in the study of polynomials and analytic functions (for example, it allows a description of the spectrum of the algebra of bounded type analytic functions \cite{AroGalGarMae96}).

Extensions of polynomials to ultrapowers were first studied by Lindst\"{o}m and Ryan in \cite{Lind_Ryan92} and by Dineen and Timoney in \cite{DineenTimoney92}, where they show that different extensions preserve the uniform norm.
In \cite{Flo02(Ultrastability)} the authors further  developed some of this ideas and proved that maximality and ultrastability are equivalent for a polynomial ideal \cite[Theorem 3.2.]{Flo02(Ultrastability)}. Therefore, for maximal ideals, the uniterated extension to the ultrapower turns out to be an isometry.


One of the aims of this note is to prove that the Aron-Berner extension is a well defined isometry on maximal and on minimal ideals. 
More generally, we study the extension of a polynomial on a Banach space $E$ to some ultrapower $(E)_{\U}$, and show the following:
if $\Q$ is a maximal or a minimal ideal of
$n$-homogeneous polynomials and $P \in \Q(E)$, then the
iterated extension $\overline{P}$ of $P$ to the ultrapower
$(E)_\U$ is in $\Q((E)_\U)$, and  $\|P\|_{\Q(E)}=\|\overline{P}\|_{\Q((E)_\U)}.$
As a contribution to the metric theory of symmetric tensor products, we derive the symmetric versions of the Extension Lemma and the Embedding Lemma \cite[13.2 and 13.3]{DefFlo93} (which in the non-symmetric setting are much simpler to prove). These are Corollaries \ref{AB} and \ref{embedding} below.

The article is organized as follows. In section~\ref{prel} we describe same basic properties of ultrapowers and define the extensions mentioned above.
In section~\ref{seccion principal} we prove, following \cite{DavieGamelin98} and using the representation theorem for maximal polynomial ideals, the results stated in the previous paragraph.

\smallskip

We refer to \cite{Flo97,Flo01,Flo01(extension),Flo02(On-ideals),Flo02(Ultrastability)} for the theory of symmetric tensor products and polynomial ideals.

\section{Preliminaries} \label{prel}

Throughout the paper $E$ and $F$ will be Banach spaces, $E'$ the dual space of $E$, $\K_E: E \longrightarrow E''$ the canonical embedding and $B_E$ the open unit ball of $E$. We will denote by $FIN(E)$ the class of all finite dimensional subspaces of the Banach space $E$.

We now recall some basic properties of ultrapowers. The reader is referred to \cite{Heinrich80,Kursten} for further details.
Let $\U$ be an ultrafilter on a set $I$. Whenever the limit with respect to $\U$ of a family $\{ a_\i : \i \in I \}$ exists, we denote it by $\lim_{\i, \U}a_\i$.
For a Banach space $E$, $(E)_{\U}$, the \textit{ultrapower of $E$ respect to the filter $\U$}, consists in classes of elements of the form $z=(z_\i)_\U$, with $z_\i \in E$, for each $\i \in I$, where the norm of $(z_\i)$
is uniformly bounded, and where we identify $(z_\i)$ with $(y_\i)$ if $\lim_{i,\U} \|z_\i - y_\i\| =0$. The space $(E)_{\U}$ is a Banach space
under the norm $$\|(z_\i)_{\U}\| = \lim_{\i, \U} \|z_\i\|.$$

We may consider $E$ as a subspace of the ultrapower $(E)_{\U}$ by means of the canonical embedding
$h_E: E \hookrightarrow (E)_\U$ given by $h_Ex = (x_\i)_\U$ where $x_\i=x$ for all $\i$.

Let us now define the ultrapower of an operator.
If $T : E \to F$ is a bounded linear operator, the \textit{ultrapower operator respect to
the ultrafilter $\U$} will be the operator from $(E)_\U$ to
$(F)_\U$ defined according the following rule $(z_\i)_\U \to
(Tz_\i)_\U$. We will denote this operator $(T)_\U$. It can be
seen that $\|(T)_\U\|$ is $\|T\|$.

We will need a special property of ultrapowers \cite[Proposition 6.1]{Heinrich80}, \cite[Statz 4.1.]{Kursten}:
\begin{proposition}{(Local determination of ultrapowers)} \label{determinacion local} Let $E$ be a Banach space and $M~\in~FIN((E)_\U)$. For each $\i \in I$ there
exist an operator $R_\i \in \mathcal{L}(M,E)$ such that
\begin{enumerate}
\item $z=(R_\i z)_\U$ for all $z \in M$
\item $\|R_\i\| \leq 1$ for all $\i \in I$ and there is an $\mathcal{U} \in \U$ with $\|R_\i\|=1$ for all $\i \in \mathcal{U}$.
\item for all $\varepsilon >0$ there is an
$\mathcal{U}_\varepsilon \in \U$ such that the inverse $R_{\i}^{-1} :
R_\i(M) \to M$ exist and $\|R_\i\| \leq 1 + \varepsilon$ for all
$\i \in \mathcal{U}_\varepsilon$.
\end{enumerate}
\end{proposition}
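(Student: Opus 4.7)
The plan is to build $R_\i$ by lifting a basis of $M$ to $E$, using compactness of the unit sphere of the finite-dimensional space $M$ to control norms on a set in $\U$, and then rescaling so that the uniform bound $\|R_\i\|\leq 1$ and the exact identity $z = (R_\i z)_\U$ both hold. Fix a basis $v_1,\dots,v_m$ of $M$ with $\|v_1\|=1$, and representatives $v_j = (v_\i^j)_\U$. Since $\|v_\i^1\|\to 1$ along $\U$, the set $A := \{\i : \|v_\i^1\| \geq 1/2\}$ lies in $\U$; on $A$ I replace $v_\i^1$ by $v_\i^1/\|v_\i^1\|$ and on its complement by any fixed unit vector. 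This does not change the ultralimit but ensures $\|v_\i^1\|=1$ for every $\i\in I$. Define $T_\i\colon M\to E$ as the linear extension of $T_\i v_j = v_\i^j$; then $(T_\i z)_\U = z$ for all $z\in M$, and $\|T_\i\|\geq \|T_\i v_1\|=1$ for every $\i$.

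To bound $\|T_\i\|$ from above, given $\delta>0$, I would choose a finite $\delta$-net $z^{(1)},\dots,z^{(N)}$ of $S_M$. Since $\lim_{\i,\U}\|T_\i z^{(k)}\|=1$ for each $k$, the set $\mathcal{V}_\delta := \bigcap_{k=1}^N \{\i : |\|T_\i z^{(k)}\|-1|\leq\delta\}$ lies in $\U$, and a standard $\delta$-net argument with the triangle inequality yields $(1-c\delta)\|z\|\leq\|T_\i z\|\leq(1+c\delta)\|z\|$ for all $z\in M$ and all $\i\in\mathcal{V}_\delta$, with a universal constant $c$. In particular $T_\i$ is a $(1+O(\delta))$-isomorphism onto its image on $\mathcal{V}_\delta$, and $\|T_\i\|\to 1$ along $\U$.

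Finally set $R_\i := T_\i/\|T_\i\|$. Since $\|T_\i\|\geq 1$ for every $\i$, we get $\|R_\i\|=1$ identically, which is stronger than (2). The scalar $1/\|T_\i\|$ tends to $1$ along $\U$, so $\|R_\i z - T_\i z\|\to 0$ for every $z\in M$ and hence $(R_\i z)_\U = (T_\i z)_\U = z$, giving (1). For (3), on $\mathcal{V}_\delta$ one has $R_\i^{-1} = \|T_\i\|\, T_\i^{-1}$ with $\|R_\i^{-1}\| \leq (1+c\delta)/(1-c\delta)$, which is $\leq 1+\varepsilon$ once $\delta = \delta(\varepsilon)$ is small, so $\mathcal{U}_\varepsilon := \mathcal{V}_{\delta(\varepsilon)}$ works.

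The main obstacle I expect is getting the exact bound $\|R_\i\|\leq 1$ on all of $I$ together with the exact identity $(R_\i z)_\U = z$, rather than approximate versions on a set in $\U$. The device above handles this in two steps: first normalize the representative of a single unit vector so that $\|T_\i\|\geq 1$ holds automatically for every $\i$, then rescale by $1/\|T_\i\|$, exploiting that modifications by scalars tending to $1$ are invisible to the ultralimit.
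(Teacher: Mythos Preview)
The paper does not give its own proof of this proposition: it is quoted, with reference to Heinrich~\cite{Heinrich80} and K\"ursten~\cite{Kursten}, and then used as a black box (in fact only (1) and the bound $\|R_\i\|\le 1$ from (2) are invoked later). So there is no ``paper's proof'' to compare against.

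Your argument is correct and is essentially the standard one (close to Heinrich's). The two devices you single out are exactly the right ones: normalizing the representative of one unit basis vector so that $\|T_\i\|\ge 1$ for \emph{every} $\i$, and then rescaling by $1/\|T_\i\|$, which converges to $1$ along $\U$ and is therefore invisible at the level of the ultrapower. Two small remarks on presentation. First, it is worth noting explicitly that $\sup_\i\|T_\i\|<\infty$ (use the dual basis of $M$ and the fact that each representative family $(v_\i^j)_\i$ is bounded); this is what makes the ultralimit $\lim_{\i,\U}\|T_\i\|$ exist and equal $1$, and hence makes the rescaling harmless. Second, in your $\delta$-net step the constant is not quite ``universal'' a priori, since the triangle inequality produces a $\|T_\i\|\delta$ term; but taking the supremum over $S_M$ yields $\|T_\i\|\le (1+\delta)/(1-\delta)$ on $\mathcal V_\delta$, which then feeds back into the lower bound, so the argument closes. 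Finally, you correctly read item~(3) as a bound on $\|R_\i^{-1}\|$ rather than on $\|R_\i\|$; the statement as printed contains an evident misprint.
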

We shall only use (1) and the first part of (2).

There are different ways of extending polynomials from a Banach space to an ultrapower. Two approaches are the iterated and the uniterated extension.
Let $(E)_\U$ be an ultrapower of a Banach space $E$. For $\Phi$ a continuous $n$-linear function on $E$ we define  an $n$-linear map $\overline{\Phi}$ on $(E)_\U$ by
$$ \overline{\Phi}(z_1,\dots, z_n)=\lim_{\i_1, \U} \dots \lim_{\i_n, \U} \Phi(z_{\i_1}^{(1)}, \dots,   z_{\i_n}^{(n)}),$$
for $z_j=(z_{\i_j}^{(j)})_\U \in (E)_\U$.
If $P$ is an $n$-homogeneous continuous polynomial and $A$ is its associated symmetric $n$-linear mapping,
the \textit{iterated extension}, $\overline{P}$, of $P$ to $(E)_\U$ is defined by
$$ \overline{P}((z_\i)_\U):= \overline{A}((z_\i)_\U, \dots, (z_\i)_\U),$$ and this coincides with $\lim_{\i_1, \U} \dots \lim_{\i_n, \U} A(z_{\i_1}, \dots, z_{\i_n}).$

On the other hand the \textit{uniterated extension}, $\widetilde{P}$, is defined by
$$ \widetilde{P}((z_\i)_\U):= \lim_{\i, \U} P(z_\i).$$

Similarly, there are two analogous ways of extending a polynomial from a Banach space into its bidual. The classical Aron-Berner extension \cite{AroBer78} which is, by construction, iterated (the associated multilinear form is extended from the last variable to the first one by weak-star continuity) and the uniterated Aron-Berner extension (a term coined in  \cite{Flo97}). We need to construct a local ultrapower of $E$ in order to define this uniterated extension, so we give some details.

First, we recall the Principle of Local Reflexivity:
for each $M \in FIN(E'')$, $N \in FIN(E')$ and $\varepsilon > 0$, there
exists an operator $T \in \mathcal{L}(M,E)$ such that
\begin{enumerate}
\item  T is an $\varepsilon$-isometry; that is, $(1-\varepsilon) \|x''\| \leq \|T(x'')\| \leq (1+ \varepsilon ) \|x''\|;$
\item $T(x'')=x''$ for every $x'' \in M \cap E$;
\item $ x'( T (x'') ) = x''(x')$ for $x'' \in M$ and $x' \in
N$.
\end{enumerate}

Let $I$ be the set  of all triples $(M, N, \varepsilon)$, where $M$ and $N$ are finite dimensional subspaces
of $E''$ and $E'$ respectively and $\varepsilon >0$. For each $\i\in I$, we denote by $M_\i$, $N_\i$ and $\varepsilon_\i$ the three elements of the triple.
We define an ordering on $I$ by setting $\i < \j$ if $M_\i \subset M_\j$, $N_\i \subset N_\j$ and $\varepsilon_\i > \varepsilon_\j$.
The collection of the set of the form $B_\i = \{ \j \in I : \i \leq \j \}$ form a filter base. Let $\U$ be an ultrafilter on $I$ which contains this filter base.
The filter $\U$ constructed here is called \textit{a local ultrafilter for $E$}, and $(E)_\U$ is called \textit{a local ultrapower of $E$}.

Finally, let us fix, for each $\i\in I$,  an operator $T_\i : M_\i \to E$ in accordance with the Principle of Local Reflexivity. The canonical embedding of $E$ into the ultrapower $(E)_\U$ extends to a canonical embedding $J_E: E'' \to (E)_\U$ defined by $J_E(x'')=(x_\i)$, where $x_\i$ is equal to $T_\i (x'')$ if $x'' \in M_\i$ and $0$ otherwise. In this way,   $J_E(E'')$ is the range of a norm one
projection on $(E)_\U$. This projection $Pr: (E)_\U \to J_E(E'')$ is given by $Pr((x_\i)_\U)=J_E(w^* - \lim_{\i, \U} x_\i)$ (the weak-star limit in $E''$ of the collection $(x_\i)$).

For a polynomial $P \in \P(E)$ its \textit{uniterated Aron-Berner extension} to $E''$ is defined by $\widetilde{P} \circ J \in \P(E'')$.

\medskip

Let us recall some definitions on the theory of Banach polynomial ideals
\cite{Flo02(On-ideals)}. A \emph{Banach ideal of continuous scalar valued
$n$-homogeneous polynomials} is a pair
$(\mathcal{Q},\|\cdot\|_{\mathcal Q})$ such that:
\begin{enumerate}
\item[(i)] $\mathcal{Q}(E)=\mathcal Q \cap \mathcal{
P}^n(E)$ is a linear subspace of $\mathcal{P}^n(E)$ and $\|\cdot\|_{\mathcal Q}$ is a norm which makes the pair
$(\mathcal{Q},\|\cdot\|_{\mathcal Q})$ a Banach space.

\item[(ii)] If $T\in \mathcal{L} (E_1,E)$, $P \in \mathcal{Q}(E)$ then $P\circ T\in \mathcal{Q}(E_1)$ and $$ \|
P\circ T\|_{\mathcal{Q}(E_1)}\le  \|P\|_{\mathcal{Q}(E)} \| T\|^n.$$

\item[(iii)] $z\mapsto z^n$ belongs to $\mathcal{Q}(\mathbb K)$
and has norm 1.
\end{enumerate}

Let $(\mathcal{Q},\|\cdot\|_{\mathcal Q})$ be the Banach ideal of continuous scalar valued
$n$-homogeneous polynomials and, for $P \in \mathcal{
P}^n(E)$, define
$\|P\|_{\Q^{max}(E)}:= \sup \{ \|P|_M\|_{\Q(M)} : M \in FIN(E) \} \in [0, \infty].$
The maximal kernel of $\mathcal{Q}$ is the ideal given by $ \mathcal{Q}^{max} := \{P \in \mathcal{P}^n : \|P\|_{\Q^{max}} < \infty \}$. An ideal $\mathcal{Q}$ is said to be maximal if $\mathcal{Q} \overset{1}{=} \mathcal{Q}^{max}$.

The minimal kernel of $\mathcal{Q}$ is defined as the composition ideal $\mathcal{Q}^{min} := \mathcal{Q} \circ \overline{\mathfrak{F}}$, where $\overline{\mathfrak{F}}$ stands for the ideal of approximable operators. In other words, a polynomial $P$ belongs to $\mathcal{Q}^{min}(E)$ if it admits a factorization
\begin{equation} \label{factorizacion}
\xymatrix{ E  \ar[rr]^{P} \ar[rd]^{T} & & {\mathbb{K}} \\
& {F} \ar[ru]^{Q} & },
\end{equation}
where $F$ is a Banach space, $T : E \to F$ is an
approximable operator and $Q$ is in $\Q(F)$.
The minimal norm is given by $\|P\|_{\Q^{min}} := \inf \{  \|Q\|_{\Q(F)} \|T\|^n \}$, where the infimum runs over all possible factorizations as in~(\ref{factorizacion}). An ideal $\mathcal{Q}$ is said to be minimal if $\mathcal{Q} \overset{1}{=} \mathcal{Q}^{min}$.

For properties about maximal and minimal ideals of homogeneous polynomials and examples see  \cite{Flo02(Ultrastability),Flo01} and the references therein.

Floret and Hunfeld proved the following result \cite[Theorems 3.2 and 3.3.]{Flo02(Ultrastability)}.
\begin{theorem}
Let $\Q$ be a maximal ideal of $n$-homogeneous polynomials, $Q \in \mathcal{P}^n(E)$ and $\U$ an ultrafilter of $E$. Then $P$ belongs to $\Q(E)$ if and only if the uniterated extension $\widetilde{P} \in \Q((E)_\U)$. In this case $\|\widetilde{P} \|_{\Q((E)_\U)}=\|P\|_{\Q(E)}$.

If $\U$ is a local ultrafilter for $E$ we also have that $P$ belongs to $\Q(E)$ if and only if the uniterated Aron-Berner extension $\widetilde{P} \circ J$ belongs to $\Q(E'')$ and moreover $\|\widetilde{P} \circ J\|_{\Q(E'')}=\|P\|_{\Q(E)}$.
\end{theorem}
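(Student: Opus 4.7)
The plan is to exploit that maximal ideals are determined by their restrictions to finite-dimensional subspaces, together with the local determination of ultrapowers (Proposition~\ref{determinacion local}). The overall strategy is symmetric in both parts: one direction comes immediately from the ideal axiom applied to an isometric embedding, while the reverse requires a finite-dimensional reduction combined with a compactness argument.

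The easy inequality $\|P\|_{\Q(E)} \le \|\widetilde{P}\|_{\Q((E)_\U)}$ follows from the observation that the canonical embedding $h_E : E \hookrightarrow (E)_\U$ is isometric and satisfies $\widetilde{P} \circ h_E = P$ (evaluate the definition of $\widetilde{P}$ at the constant family $x_\i \equiv x$). Ideal axiom (ii) then yields $\|P\|_\Q \le \|\widetilde{P}\|_\Q \|h_E\|^n = \|\widetilde{P}\|_\Q$; in particular $\widetilde{P} \in \Q$ forces $P \in \Q$.

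For the reverse bound, by maximality of $\Q$ it suffices to estimate $\|\widetilde{P}|_M\|_{\Q(M)}$ uniformly for every $M \in FIN((E)_\U)$. Given such an $M$, Proposition~\ref{determinacion local} supplies operators $R_\i : M \to E$ with $z = (R_\i z)_\U$ for all $z \in M$ and $\|R_\i\| \le 1$ on some set of $\U$. Evaluating,
$$\widetilde{P}(z) = \lim_{\i, \U} P(R_\i z) = \lim_{\i, \U} (P \circ R_\i)(z),$$
and each $P \circ R_\i \in \Q(M)$ satisfies $\|P \circ R_\i\|_{\Q(M)} \le \|P\|_{\Q(E)} \|R_\i\|^n \le \|P\|_{\Q(E)}$. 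Since $\Q(M)$ is finite-dimensional, its closed balls are compact and therefore closed under pointwise convergence, so $\widetilde{P}|_M$ lies in the closed $\|P\|_{\Q(E)}$-ball of $\Q(M)$. Passing to the supremum over $M$ and invoking $\Q = \Q^{max}$ finishes the first statement.

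For the Aron-Berner assertion one specializes to a local ultrafilter. The embedding $J_E : E'' \to (E)_\U$ is isometric (being a section of the norm-one projection $Pr$ constructed in Section~\ref{prel}) and a direct check, using property (2) of the Principle of Local Reflexivity, shows $J_E \circ \K_E = h_E$, hence $(\widetilde{P} \circ J_E) \circ \K_E = P$. Combining ideal axiom (ii) with the first part gives
$$\|P\|_{\Q(E)} \le \|\widetilde{P} \circ J_E\|_{\Q(E'')} \le \|\widetilde{P}\|_{\Q((E)_\U)} = \|P\|_{\Q(E)},$$
so equality holds throughout and the equivalence follows. The principal subtlety is the passage from pointwise convergence of $\{P \circ R_\i\}$ to a bound in the ideal norm of $\Q(M)$; this is precisely where finite-dimensionality of $M$ is essential, since otherwise the ideal norm need not be controlled by pointwise behaviour.
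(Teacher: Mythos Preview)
The paper does not prove this theorem: it is quoted as a result of Floret and Hunfeld \cite[Theorems 3.2 and 3.3]{Flo02(Ultrastability)} and used as background. There is therefore no proof in the paper to compare against directly.

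That said, your argument is correct and is essentially the standard one for the uniterated extension. The key simplification that makes it work is that $\widetilde{P}(z)=\lim_{\i,\U}P(R_\i z)$ involves a \emph{single} ultrafilter limit, so on each $M\in FIN((E)_\U)$ the restriction $\widetilde{P}|_M$ is a pointwise (hence, by finite-dimensionality, norm) limit of the polynomials $P\circ R_\i$, all lying in the $\|P\|_{\Q(E)}$-ball of $\Q(M)$. Two minor remarks: Proposition~\ref{determinacion local}(2) actually gives $\|R_\i\|\le 1$ for \emph{all} $\i$, not just on a set of $\U$; and the claim that $J_E$ is isometric is better justified by the fact that the $T_\i$ are $\varepsilon_\i$-isometries with $\varepsilon_\i\to 0$ along $\U$, rather than by the projection.

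It is worth contrasting your proof with what the paper does for its own Theorem~\ref{isom AB maximales} on the \emph{iterated} extension $\overline{P}$. There the single-limit formula fails: $\overline{P}(z)=\lim_{\i_1,\U}\cdots\lim_{\i_n,\U}A(R_{\i_1}z,\dots,R_{\i_n}z)$ is an iterated limit, so $\overline{P}|_M$ is \emph{not} a pointwise limit of polynomials of the form $P\circ R_\i$, and your compactness argument does not apply. This is precisely why the paper needs the Davie--Gamelin averaging technique (Lemma~\ref{diferentes indices} and Proposition~\ref{Lemma Aprox}) to approximate $\overline{P}(z)$ by $P(Rz)$ for a single contraction $R=\frac{1}{|\F|}\sum R_i$. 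Your approach is the right one for the uniterated statement, but it would not transfer to the iterated case that is the paper's actual contribution.
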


We will present a similar theorem for the iterated extension to the ultrapower and for the Aron-Berner extension. We will also conclude that the same holds in the case that $\Q$ is a minimal ideal of homogeneous polynomials.

The following proposition is due to Lindstr\"om and Ryan \cite[Proposition 2.1]{Lind_Ryan92}. It states that the Aron-Berner extension can be recovered from the iterated extension to a local ultrapower of $E$ :
\begin{proposition}\label{propolindrayan}
If $(E)_\U$ is a local ultrapower of $E$, then the restriction of $\overline{P}$ to the canonical image of $E''$ in $(E)_\U$ coincides with the Aron-Berner extension of $P$ to $E''$.
\end{proposition}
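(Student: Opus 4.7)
The plan is to prove the more general multilinear statement: for every continuous $n$-linear form $A$ on $E$ and all $x_1'',\dots,x_n'' \in E''$,
$\overline{A}(J_E x_1'',\dots,J_E x_n'') = AB(A)(x_1'',\dots,x_n'')$,
where $AB(A)$ denotes the Aron-Berner extension of $A$ (built by successive $w^*$-continuous extensions, last variable first). The proposition then follows by applying this to the symmetric $n$-linear form associated with $P$ and evaluating on the diagonal $J_E x''$.

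The preliminary and decisive observation is that $T_\i(x'') \to x''$ in the weak-star topology of $E''$ along the local ultrafilter $\U$, for every $x'' \in E''$. Indeed, given $x' \in E'$, the construction of the local ultrafilter guarantees $\{\i : x' \in N_\i\} \in \U$; on this set, property~(3) of the Principle of Local Reflexivity gives $x'(T_\i(x''))=x''(x')$, so $\lim_{\i,\U} x'(T_\i(x'')) = x''(x')$.

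With this in hand, the multilinear claim follows by induction on $n$. The base case $n=1$ is immediate: for $A\in E'$, $\overline{A}(J_E x'') = \lim_{\i,\U} A(T_\i(x'')) = x''(A) = AB(A)(x'')$. For the inductive step, fix $\i_1$ and apply the hypothesis to the $(n-1)$-linear form $B := A(T_{\i_1}(x_1''),\cdot,\dots,\cdot)$: the inner $n-1$ iterated limits equal $AB(B)(x_2'',\dots,x_n'')$. The map $z \mapsto \phi(z):= AB(A(z,\cdot,\dots,\cdot))(x_2'',\dots,x_n'')$ is a bounded linear functional on $E$ --- linearity is clear from the linearity of $z\mapsto A(z,\cdot,\dots,\cdot)$ and of the Aron-Berner construction, and $|\phi(z)|\leq \|A\|\,\|z\|\prod_{k=2}^{n}\|x_k''\|$ --- so $\phi\in E'$. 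The outer limit $\lim_{\i_1,\U}\phi(T_{\i_1}(x_1''))$ thus equals $x_1''(\phi)$ by the weak-star convergence above, and by construction of the Aron-Berner extension $x_1''(\phi) = AB(A)(x_1'',\dots,x_n'')$.

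The main obstacle is the standard but essential verification that, after Aron-Berner extension in the last $n-1$ variables, the resulting map is a \emph{bounded} linear functional on $E$ in the remaining first variable (i.e.\ lies in $E'$). This boundedness is precisely what licenses reducing the outer ultrapower limit to the weak-star convergence of $T_{\i_1}(x_1'')$ via Local Reflexivity; without it, one could not pass from an ultrafilter limit inside $E$ to a functional evaluation at $x_1''\in E''$.
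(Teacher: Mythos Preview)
Your argument is correct and is essentially the standard proof of this fact. Note, however, that the paper does not give its own proof of this proposition: it is quoted, without proof, from Lindstr\"om and Ryan \cite[Proposition 2.1]{Lind_Ryan92}, so there is nothing in the present paper to compare your approach against.

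One minor notational point: by the paper's definition, $(J_E x'')_\i$ equals $T_\i(x'')$ only when $x''\in M_\i$ (and is $0$ otherwise), so the weak-star convergence you establish should, strictly speaking, be stated for the family $\big((J_E x'')_\i\big)_{\i\in I}$ rather than for $T_\i(x'')$. Since $\{\i : x''\in M_\i\}\in\U$, this changes nothing in the argument.
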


\section{The results}\label{seccion principal}

Maximal and minimal ideals of homogeneous polynomials are easily seen to be closed under the Aron-Berner extension: just use a multilinear version of the Extension Lemma \cite[13.2]{DefFlo93} (whose proof is identical) and the main result of \cite{Flo01(extension)}. In this section we will show that this extension is actually an isometry.

First, let $A$ be the symmetric multilinear form associated to a polynomial $P$. For each fixed $j$, $1 \leq j \leq n$, $x_1, \dots, x_{j-1} \in
E$, and $z_j, z_{j+1}, \dots z_n \in (E)_\U$, we have
$$\overline{A}(h_Ex_1, \dots, h_Ex_{j-1}, z_j, z_{j+1}, \dots,
z_n) = \lim_{i_j, \U} \overline{A}(h_Ex_1, \dots,
h_Ex_{j-1},h_Ez_{{\i}_j}^{(j)}, z_{j+1}, \dots, z_{n}),$$
where $\overline A$ is the iterated extension of $A$ to a local ultrapower.

Now, we will imitate the procedure used by Davie and
Gamelin in \cite{DavieGamelin98}. Denote $A$ the symmetric $n$-linear form associated to
$P$. We have the following lemma:

\begin{lemma}\label{diferentes indices} Let $M \in FIN((E)_\U)$ and $z_1, \dots, z_r \in M$. For a given natural number $m$, and $\varepsilon >
0$ there exist operators $R_1, \dots, R_m \in \mathcal{L}(M,E)$
with norm less or equal to 1 such that
\begin{equation}\label{desigualdad lema} \big| A( R_{i_1}z_k, \dots,
R_{i_n}z_k ) - \overline{A}(z_k, \dots, z_k)  \big| < \varepsilon \end{equation} for every $i_1, \dots, i_n$
distinct indices between $1$ and $m$ and every $k = 1 \dots r$.
\end{lemma}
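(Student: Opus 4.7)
The idea is to imitate the Davie--Gamelin trick of pulling the iterated extension back through the local-determination operators of Proposition \ref{determinacion local}, strengthened so that the resulting approximation is uniform over every $n$-subset of the chosen operators (and over the finitely many $z_k$).

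I would begin by using parts (1)--(2) of Proposition \ref{determinacion local} to fix, for each $\i\in I$, an operator $R_\i\in\mathcal{L}(M,E)$ with $\|R_\i\|\le 1$ and $z=(R_\i z)_\U$ for every $z\in M$. Setting $L_k:=\overline A(z_k,\dots,z_k)$, the very definition of the iterated extension yields
$$L_k=\lim_{\i_1,\U}\cdots\lim_{\i_n,\U} A(R_{\i_1}z_k,\dots,R_{\i_n}z_k).$$
I would then introduce the partial iterated limits
$$f^k_p(\i_1,\dots,\i_p):=\lim_{\i_{p+1},\U}\cdots\lim_{\i_n,\U} A(R_{\i_1}z_k,\dots,R_{\i_n}z_k),$$
for $0\le p\le n$ and $k=1,\dots,r$, so that $f^k_0=L_k$, $f^k_n(\i_1,\dots,\i_n)=A(R_{\i_1}z_k,\dots,R_{\i_n}z_k)$, and $\lim_{\i,\U} f^k_{p+1}(\i_1,\dots,\i_p,\i)=f^k_p(\i_1,\dots,\i_p)$. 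Since $A$ is symmetric, each $f^k_p$ is a symmetric function of its $p$ arguments.

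The next step is to choose indices $\i_1,\dots,\i_m\in I$ one at a time so that eventually $A(R_{\i_{l_1}}z_k,\dots,R_{\i_{l_n}}z_k)-L_k$ can be telescoped with controlled summands. Once $\i_1,\dots,\i_{j-1}$ are fixed, I would define, for each $k\in\{1,\dots,r\}$, each $p\in\{0,\dots,\min(n-1,j-1)\}$, and each $p$-element subset $\{l_1,\dots,l_p\}\subseteq\{1,\dots,j-1\}$, the set
$$W^k_{l_1,\dots,l_p}:=\bigl\{\i\in I:\,\bigl|f^k_{p+1}(\i_{l_1},\dots,\i_{l_p},\i)-f^k_p(\i_{l_1},\dots,\i_{l_p})\bigr|<\varepsilon/n\bigr\}.$$
Each such $W^k_{l_1,\dots,l_p}$ lies in $\U$ by definition of the ultrafilter limit, and only finitely many arise at step $j$, so their intersection $V_j$ is still in $\U$; I would pick any $\i_j\in V_j$ and set $R_j:=R_{\i_j}$ (automatically of norm $\le 1$).

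To conclude, for distinct $l_1<l_2<\cdots<l_n$ in $\{1,\dots,m\}$ and any $k$, the telescope
$$A(R_{l_1}z_k,\dots,R_{l_n}z_k)-L_k=\sum_{p=0}^{n-1}\bigl(f^k_{p+1}(\i_{l_1},\dots,\i_{l_{p+1}})-f^k_p(\i_{l_1},\dots,\i_{l_p})\bigr)$$
will have each summand bounded by $\varepsilon/n$, since at step $l_{p+1}$ I will have forced $\i_{l_{p+1}}\in W^k_{l_1,\dots,l_p}$ (the earlier indices being already fixed because $l_q<l_{p+1}$ for $q\le p$). Summing gives \eqref{desigualdad lema} for ordered tuples, and by the symmetry of $A$ the same bound extends to every tuple of distinct indices. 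The only non-routine point is the bookkeeping: verifying that the finitely many ``inner-limit'' constraints imposed on $\i_j$ sit simultaneously in $\U$. This is immediate from the closure of $\U$ under finite intersection, but organising the dependence of $V_j$ on the previously chosen indices cleanly is the bulk of the work.
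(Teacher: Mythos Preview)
Your proposal is correct and follows essentially the same route as the paper: both use the local-determination operators from Proposition~\ref{determinacion local}, select $R_1,\dots,R_m$ inductively so that at each step the finitely many ``one more inner limit'' inequalities (your sets $W^k_{l_1,\dots,l_p}$) hold simultaneously via the ultrafilter, and conclude by a telescoping sum bounded term-by-term by $\varepsilon/n$, reducing to ordered tuples by the symmetry of $A$. Your partial-limit functions $f^k_p$ are exactly the quantities $\overline{A}(h_ER_{i_1}z_k,\dots,h_ER_{i_p}z_k,z_k,\dots,z_k)$ that the paper manipulates, so the two arguments differ only in notation and in the level of detail given to the bookkeeping.
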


\begin{proof}
Since $A$ is symmetric, in order  to prove the Lemma it suffices to obtain \eqref{desigualdad lema}
for $i_1 < \dots < i_n$. We will select the operator
$R_1, \dots, R_m$ inductively by the following procedure: by
Proposition~\ref{determinacion local}, for each $\i \in I$ there
exist an operator $R_\i \in \mathcal{L}(M,E)$ with norm less or
equal to 1 such $z_k = (R_\i z_k)_\U$.

Since $z_k = (R_\i z_k)_\U$ for each $k$,
the set $\{ \i \in I \; : \: \overline{A}(h_E R_\i z_k,z_k, \dots, z_k)
- \overline{A}(z_k,z_k, \dots,z_k) \big| < \varepsilon /n  \}$ belongs to the filter
$\U$. Therefore, we can pick $R_1 \in \mathcal{L}(M,E)$ such that
$$\big| \overline{A}(h_ER_1z_k,z_k, \dots, z_k) -  \overline{A}(z_k,z_k, \dots,z_k) \big| < \varepsilon /n  ,$$ for every $k=1 \dots r$.

In a similar way we can choose $R_2$ such that
$$\big| \overline{A}(h_ER_2z_k,z_k, \dots, z_k) -  \overline{A}(z_k,z_k, \dots,z_k) \big| < \varepsilon /n
,$$and moreover,
$$\big| \overline{A}(h_ER_1z_k, h_ER_2z_k, z_k, \dots, z_k) -  \overline{A}(h_ER_1z_k,z_k, \dots,z_k) \big| <
\varepsilon /n ,$$ for every $k$. Proceeding in this way, we get $R_l$'s so that

$$\big| \overline{A}(h_ER_{i_1}z_k, \dots, h_ER_{i_{r-1}}z_k, h_ER_{i_{r}}z_k, z_k, \dots,  z_k) -   \overline{A}(h_ER_{i_1}z_k, \dots, h_ER_{i_{r-1}}z_k, z_k, \dots,  z_k) \big| <
\varepsilon /n ,$$ whenever $i_1 < \dots <i_r$ and $k=1,\dots r$. Then,
$$ \big| \overline{A}(h_E R_{i_1}z_k, \dots, h_ER_{i_{n}}z_k) - \overline{A}(z_k \dots,
z_k)\big|$$ is estimated  by the sum of $n$ terms
$$ \big| \overline{A}(h_ER_{i_1}z_k, \dots, h_ER_{i_{n}}z_k) - \overline{A}(h_ER_{i_1}z_k, \dots, h_ER_{i_{n-1}}z_k,
z_k) \big| + \; \dots \;  $$ $$ + \big| \overline{A}(h_ER_{i_1}z_k, z_k
\dots, z_k) - \overline{A}(z_k, \dots,z_k) \big|,$$ each smaller
than $\varepsilon /n $, for all $k=1 \dots, r$.
\end{proof}

\begin{proposition} \label{Lemma Aprox}  Let $M \in FIN((E)_\U)$ and $z_1, \dots, z_r \in M$, $P: E \to
\mathbb{K}$ a continuous polynomial and $\varepsilon > 0$. There
exist a finite subset $\F$ of $\mathbb{N}$ and operators $(R_i)_{i \in \F}$ in
$\mathcal{L}(M,E)$ with norm less or equal than 1, verifying that
$$ \big| \sum_{k=1}^r \overline{P} \big( z_k\big) - \sum_{k=1}^r P \big( \frac{1}{|\F|} \sum_{i \in \F} R_i z_k \big) \big| < \varepsilon. $$
\end{proposition}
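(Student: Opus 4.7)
The plan is to apply the Davie--Gamelin averaging argument \cite{DavieGamelin98} in combination with Lemma~\ref{diferentes indices}. Let $A$ denote the symmetric $n$-linear form associated with $P$, and set $C := \max_{1 \le k \le r} \|z_k\|$. For any $m \in \mathbb{N}$ and any operators $R_1, \dots, R_m \in \mathcal{L}(M,E)$ with $\|R_i\|\le 1$, multilinearity yields
\begin{equation*}
P\!\left(\frac{1}{m}\sum_{i=1}^{m} R_i z_k\right) = \frac{1}{m^n}\sum_{i_1,\dots,i_n=1}^{m} A(R_{i_1}z_k,\dots,R_{i_n}z_k).
\end{equation*}

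I would split this sum into $S^{\neq}_k$, indexed by the $m(m-1)\cdots(m-n+1)$ tuples of pairwise distinct indices, and a remainder $S^{=}_k$ indexed by tuples in which at least one index repeats. Each term of $S^{=}_k$ is bounded by $\|A\|C^n$, and the number of such tuples is $m^n - m(m-1)\cdots(m-n+1) = O(m^{n-1})$, so $|S^{=}_k|/m^n \to 0$ as $m\to\infty$. Simultaneously, the combinatorial coefficient $\alpha_m := m(m-1)\cdots(m-n+1)/m^n$ tends to $1$.

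Given $\varepsilon>0$, I would first fix $m$ so large that
\begin{equation*}
r\,\|A\|\,C^n(1-\alpha_m) + (1-\alpha_m)\sum_{k=1}^{r}|\overline{P}(z_k)| < \varepsilon/2,
\end{equation*}
and then invoke Lemma~\ref{diferentes indices} with tolerance $\varepsilon/(2r)$ to produce operators $R_1,\dots,R_m\in\mathcal{L}(M,E)$ of norm at most one for which $\bigl|A(R_{i_1}z_k,\dots,R_{i_n}z_k)-\overline{P}(z_k)\bigr|<\varepsilon/(2r)$ whenever $i_1,\dots,i_n\in\{1,\dots,m\}$ are pairwise distinct and $k\le r$. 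Writing $\overline{P}(z_k)-\tfrac{1}{m^n}(S^{\neq}_k+S^{=}_k)$ as $(1-\alpha_m)\overline{P}(z_k)+\bigl(\alpha_m\overline{P}(z_k)-\tfrac{1}{m^n}S^{\neq}_k\bigr)-\tfrac{1}{m^n}S^{=}_k$, the three contributions are controlled respectively by the second term in the displayed bound, by $r\cdot\varepsilon/(2r)=\varepsilon/2$, and by the first term in the displayed bound. Taking $\F:=\{1,\dots,m\}$ and summing over $k$ then gives the required estimate.

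The only delicate point is the order in which parameters are selected: $m$ must be chosen first, based on the combinatorial tail and the \emph{a priori} bound $\|A\|C^n$, and only afterwards can Lemma~\ref{diferentes indices} be applied with the appropriate tolerance for that particular $m$. There is no genuine obstacle beyond this bookkeeping, and no new analytic input is needed since the lemma already encapsulates the use of local determination of ultrapowers.
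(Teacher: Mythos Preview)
Your proposal is correct and follows essentially the same Davie--Gamelin averaging argument as the paper: expand $P\bigl(\tfrac{1}{m}\sum R_i z_k\bigr)$ by multilinearity, separate the tuples with distinct indices (controlled via Lemma~\ref{diferentes indices}) from those with repeated indices (controlled by the combinatorial factor $1-\alpha_m$), and choose $m$ large using only the a~priori bound $\|A\|C^n$. The only cosmetic difference is that the paper keeps the difference $\overline{A}(z_k,\dots,z_k)-A(R_{i_1}z_k,\dots,R_{i_n}z_k)$ intact on the repeated-index part and bounds it by a single constant, whereas you split off the extra term $(1-\alpha_m)\overline{P}(z_k)$; both decompositions lead to the same estimate.
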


\begin{proof} For
$\varepsilon > 0$, fix $m$ large enough and choose $R_1, \dots, R_m$ as
in the previous Lemma, such that $$ \big| \overline{A}(z_k, \dots,
z_k) - A( R_{i_1}z_k, \dots, R_{i_n}z_k ) \big| < \varepsilon/{2r} $$
for every $i_1, \dots, i_n$ distinct indices between $1$ and $m$
and every $k = 1 \dots r$. Now, we set $\F = \{1, \dots,m\}$ and define
$R \in \mathcal{L}(M,E)$ given by $R: = \frac{1}{|\F|} \sum_{i \in
\F} R_i$. For  $k \in \{1, \dots, r\}$, we have
\begin{align*}
\big| \overline{P}(z_k) - P(Rz_k) \big| & = \big| \frac{1}{m^n}
\sum_{i_1, \dots, i_n = 1}^m [\overline{A}(z_k, \dots, z_k) -
A(R_{{i_1}}z_k, \dots, R_{{i_n}}z_k )] \big| \\
& \leq \big| \Sigma_1^k \big| + \leq | \Sigma_2^k |,
\end{align*}
where $\Sigma_1^k$ is the sum over the $n$-tuples of non-repeated
indices (which is less than $\varepsilon/{2r}$) and $\Sigma_2^k$ is
the sum over the remaining indices. It is easy to show that there
are exactly $m^n - \prod_{j=0}^{n-1} (m-j)$ summands in
$\Sigma_2^k$, each bounded by a constant $C>0$ (obviously we can
assume that $C$ is independent of $k$), thus
$$\big| \Sigma_2^k  | \leq \frac{1}{m^n} \big (m^n - \prod_{j=0}^{n-1}
(m-j) \big)C= \big[ 1 - (1-\frac{1}{m}) \dots (1 - \frac{n-1}{m})
\big] C.$$ Taking $m$ sufficiently large this is less than
$\varepsilon/{2r}$.
\end{proof}

Recall that an s-tensor norm $\alpha$ is called finitely generated if for every Banach space $E$ and $z \in \otimes^{n,s} E$, we have: $ \alpha (z, \otimes^{n,s}E) = \inf \{ \alpha(z, \otimes^{n,s}M) : M \in FIN(E),\; z \in \otimes^{n,s}M \}.$
Now we can state the main theorem:
\begin{theorem} \label{isom AB maximales}
Let $\alpha$ be a finitely generated s-tensor norm and $P \in \big( \widetilde{\otimes}^{n,s}_\alpha E \big)'$ a
polynomial. The iterated extension $\overline{P}$ of $P$ to the
ultrapower $(E)_\U$ belongs to $\big( \widetilde{\otimes}^{n,s}_\alpha
(E)_\U \big)'$ and
$$ \|P\|_{\big( \widetilde{\otimes}^{n,s}_\alpha E \big)'} = \|\overline{P}\|_{\big( \widetilde{\otimes}^{n,s}_\alpha (E)_\U
\big)'}.$$
Equivalently, if $\Q$ is a maximal ideal of
$n$-homogeneous polynomials and $P \in \Q(E)$,  the
iterated extension $\overline{P}$ of $P$ to the ultrapower
$(E)_\U$ belongs to $\Q((E)_\U)$ and
$$\|P\|_{\Q(E)}=\|\overline{P}\|_{\Q((E)_\U)}.$$
\end{theorem}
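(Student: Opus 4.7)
The plan is to prove the two formulations simultaneously: by the representation theorem for maximal polynomial ideals, a maximal ideal $\Q$ corresponds to a finitely generated s-tensor norm $\alpha$ with $\Q(E) \overset{1}{=} (\widetilde{\otimes}^{n,s}_\alpha E)'$ isometrically for every Banach space $E$, so the two statements are equivalent. The easy inequality $\|P\|_{\Q(E)} \leq \|\overline{P}\|_{\Q((E)_\U)}$ is immediate: the canonical map $h_E : E \hookrightarrow (E)_\U$ is a linear isometry with $\overline{P} \circ h_E = P$, and the ideal property gives the claim at once.

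For the reverse inequality I would exploit maximality to reduce to local estimates. Since $\Q = \Q^{max}$, it suffices to show $\|\overline{P}|_M\|_{\Q(M)} \leq \|P\|_{\Q(E)}$ for each $M \in FIN((E)_\U)$. Via the representation theorem this reduces to proving $|\overline{P}(u)| \leq \|P\|_{\Q(E)} \cdot \alpha(u, \otimes^{n,s} M)$ for every $u \in \otimes^{n,s} M$. Writing $u = \sum_{k=1}^r \lambda_k z_k^{\otimes n}$ with $z_k \in M$, the idea (mimicking Davie--Gamelin) is to find an operator $R \in \mathcal{L}(M,E)$ with $\|R\| \leq 1$ for which $\overline{P}(u) \approx (P \circ R)(u)$; then the ideal property delivers
\[ |(P \circ R)(u)| \leq \|P \circ R\|_{\Q(M)} \cdot \alpha(u) \leq \|P\|_{\Q(E)} \|R\|^n \alpha(u) \leq \|P\|_{\Q(E)} \alpha(u). \]
Such an $R$ is precisely what Proposition~\ref{Lemma Aprox} produces, provided one can absorb the scalars $\lambda_k$ into the sum it controls.

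The main obstacle is exactly this last point: Proposition~\ref{Lemma Aprox} is stated without coefficients on the terms $\overline{P}(z_k)$. I see two ways around it: either revisit its proof to extract the stronger (but implicit) pointwise estimate $|\overline{P}(z_k) - P(R z_k)| < \varepsilon'$ uniformly in $k$, which once applied with $\varepsilon'$ small compared to $\sum_k |\lambda_k|$ yields what we need; or rescale each $z_k$ by an $n$-th root of $\lambda_k$ and use the $n$-homogeneity of $P$ and $\overline{P}$ to transfer the scalars inside. Either way, letting $\varepsilon \to 0$ gives $|\overline{P}(u)| \leq \|P\|_{\Q(E)} \alpha(u)$, hence $\|\overline{P}|_M\|_{\Q(M)} \leq \|P\|_{\Q(E)}$, and the supremum over $M \in FIN((E)_\U)$ closes the argument by maximality.
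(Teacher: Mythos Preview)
Your approach is essentially the paper's: reduce via the representation theorem to the tensor-norm formulation, test against finite-dimensional $M$ (using that $\alpha$ is finitely generated, equivalently that $\Q$ is maximal), invoke Proposition~\ref{Lemma Aprox} to build a contraction $R:M\to E$ with $\overline{P}(u)\approx (P\circ R)(u)$, and conclude by the metric mapping property. The ``obstacle'' you flag does not appear in the paper's write-up---it simply takes $w=\sum_k \otimes^n z_k$ with no coefficients, which is exactly your second workaround (absorb each $\lambda_k$ into $z_k$ via an $n$-th root); your first workaround is also valid, since the proof of Proposition~\ref{Lemma Aprox} in fact yields the termwise bound $|\overline{P}(z_k)-P(Rz_k)|<\varepsilon/r$.
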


\begin{proof} Thanks to the representation theorem for maximal polynomial ideals \cite[Section 3.2]{Flo02(Ultrastability)} (see also \cite[Section 4]{Flo01}), it is enough to show the first statement.

Let $w \in \otimes^{n,s} M$, where $M \in FIN((E)_\U)$. Since $\alpha$ is finitely generated, we only have to show
that $$| \langle \overline{P},w \rangle| \leq \|P\|_{\big(
\widetilde{\otimes}^{n,s}_\alpha E \big)'} \: \alpha(w,
\otimes^{n,s}M).$$ Now, $w = \sum_{k=1}^r \otimes^n z_k$ with $z_k \in
M$. Given $\varepsilon > 0$, by Proposition~\ref{Lemma Aprox} we
can take a finite set $\F$ and operators $(R_i)_{i \in \F}$ with
$\|R_i\|_{\mathcal{L}(M,E)} \leq 1$ such that $ \big|
\sum_{k=1}^r \overline{P} \big( z_k\big) - \sum_{k=1}^r P \big(
\frac{1}{|\F|} \sum_{i \in \F} R_i z_k \big) \big| < \varepsilon $.
Therefore,
\begin{align*}
\big| \langle \overline{P},w \rangle \big|  & = \big| \sum_{k=1}^r \overline{P}
\big( z_k\big) \big| \leq \big| \sum_{k=1}^r \overline{P} \big(
z_k\big) - \sum_{k=1}^r P \big( \frac{1}{|\F|} \sum_{i \in \F} R_i
z_k \big) \big| + \big| \sum_{k=1}^r P \big( \frac{1}{|\F|}
\sum_{ i \in \F} R_i z_k \big) \big| \\
& \leq \varepsilon + \big| \langle P, \sum_{k=1}^r \otimes^n
\frac{1}{|\F|}
\sum_{i \in \F} R_i z_k \rangle \big| \\
& \leq \varepsilon + \|P\|_{\big( \widetilde{\otimes}^{n,s}_\alpha
E \big)'} \alpha( \sum_{k=1}^r \otimes^n \frac{1}{|\F|} \sum_{i \in
\F} R_i z_k \: ; \; \otimes^{n,s}E) \\
& \leq \varepsilon + \|P\|_{\big( \widetilde{\otimes}^{n,s}_\alpha
E \big)'} \alpha( \otimes^{n,s}R (\sum_{k=1}^r  z_k) \:
; \; \otimes^{n,s}E),
\end{align*}
where $R = \frac{1}{|\F|} \sum_{i \in \F} R_i$ (note that
$\|R\|_{\mathcal{L}(M,E)} \leq 1$ since each
$\|R_i\|_{\mathcal{L}(M,E)} \leq 1$). By the metric mapping
property of $\alpha$ and the previous inequality we get
$$ \big| \langle \overline{P},w \rangle  \big| \leq \varepsilon + \|P\|_{\big( \widetilde{\otimes}^{n,s}_\alpha
E \big)'} \alpha( \sum_{k=1}^r \otimes^n z_k \: ; \;
\otimes^{n,s}M),$$ which ends the proof.
\end{proof}

The following result can be seen as a symmetric version of the Extension Lemma \cite[13.2]{DefFlo93}.

\begin{corollary}\label{AB}
Let $\alpha$ be a finitely generated s-tensor norm of order $n$ and $P \in \big( \widetilde{\otimes}^{n,s}_\alpha E \big)'$ be a
polynomial. Then, the Aron-Berner extension $AB(P)$ of $P$ is in
$\big(\widetilde{\otimes}^{n,s}_\alpha E'' \big)'$ and
$$ \|P\|_{\big( \widetilde{\otimes}^{n,s}_\alpha E \big)'} = \|AB(P)\|_{\big( \widetilde{\otimes}^{n,s}_\alpha E''
\big)'}.$$
Therefore, if $\Q$ is a maximal ideal of
$n$-homogeneous polynomials and we take $P \in \Q(E)$, then its
Aron-Berner extension is in $\Q(E'')$ and
$$\|P\|_{\Q(E)}=\|AB(P)\|_{\Q(E'')}.$$
\end{corollary}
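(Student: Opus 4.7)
The plan is to bootstrap the corollary from Theorem \ref{isom AB maximales} by identifying the Aron-Berner extension with a restriction of the iterated ultrapower extension. Fix a local ultrafilter $\U$ for $E$ and consider the canonical embedding $J_E : E'' \to (E)_\U$ constructed in Section \ref{prel}. First I would note that $J_E$ is an isometric linear embedding: if $\i\in I$ is large enough that $x'' \in M_\i$, the operator $T_\i$ supplied by the Principle of Local Reflexivity satisfies $(1-\varepsilon_\i)\|x''\|\le \|T_\i(x'')\|\le (1+\varepsilon_\i)\|x''\|$, and since $\varepsilon_\i \to 0$ along $\U$, one has $\|J_E(x'')\|_{(E)_\U}=\lim_{\i,\U}\|T_\i(x'')\|=\|x''\|$. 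By Proposition \ref{propolindrayan}, $AB(P)=\overline P\circ J_E$ under the identification $E''\simeq J_E(E'')$.

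For the upper bound $\|AB(P)\|_{(\widetilde{\otimes}^{n,s}_\alpha E'')'}\le \|P\|_{(\widetilde{\otimes}^{n,s}_\alpha E)'}$, I would take $u\in \otimes^{n,s}E''$ and use the identification above to write
\[
\langle AB(P),u\rangle \;=\; \bigl\langle \overline P,\,(\otimes^{n,s}J_E)(u)\bigr\rangle.
\]
Since $J_E$ is an isometric embedding and $\alpha$ is an s-tensor norm, the metric mapping property gives $\alpha((\otimes^{n,s}J_E)(u),\otimes^{n,s}(E)_\U)\le \alpha(u,\otimes^{n,s}E'')$. Combined with Theorem \ref{isom AB maximales}, which supplies $\|\overline P\|_{(\widetilde{\otimes}^{n,s}_\alpha (E)_\U)'}=\|P\|_{(\widetilde{\otimes}^{n,s}_\alpha E)'}$, the desired bound drops out.

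For the reverse inequality I would use that $AB(P)$ extends $P$, i.e.\ $AB(P)\circ \K_E=P$. Applying the same metric mapping argument to the isometric embedding $\K_E:E\to E''$ (via $\otimes^{n,s}\K_E$) yields $\|P\|_{(\widetilde{\otimes}^{n,s}_\alpha E)'}\le \|AB(P)\|_{(\widetilde{\otimes}^{n,s}_\alpha E'')'}$, proving equality. The second statement, concerning a maximal ideal $\Q$, then follows from the representation theorem for maximal polynomial ideals in exactly the same way as the passage between the two formulations of Theorem \ref{isom AB maximales}.

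The only delicate point is the verification that $J_E$ is truly an isometry (not merely a contractive embedding), because once this is established the metric mapping property of $\alpha$ does all the work. Everything else is a clean composition of already-stated results: Proposition \ref{propolindrayan} does the translation $AB(P)\leftrightarrow \overline P|_{J_E(E'')}$, Theorem \ref{isom AB maximales} does the isometry on the ultrapower, and the representation theorem transfers from the tensor statement to the ideal statement.
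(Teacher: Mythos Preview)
Your proof is correct and follows essentially the same route as the paper: identify $AB(P)=\overline P\circ J_E$ via Proposition~\ref{propolindrayan}, bound $\|AB(P)\|$ using the ideal (equivalently, metric mapping) property together with Theorem~\ref{isom AB maximales}, and note that the reverse inequality is immediate from $AB(P)\circ\K_E=P$. The only remark is that your ``delicate point'' is not actually delicate: you only need $\|J_E\|\le 1$ (not that $J_E$ is an isometry), since the metric mapping property already gives $\alpha((\otimes^{n,s}J_E)u)\le \|J_E\|^n\alpha(u)$ for any bounded $J_E$; the paper accordingly just uses $\|J_E\|\le 1$ without further comment.
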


\begin{proof}
Let $(E)_\U$ a local ultrapower of $E$ and  $J_E: E'' \to
(E)_\U$ the canonical embedding. By Proposition~\ref{propolindrayan} the iterated
extension to the local ultrapower of $E$ restricted to $E''$
coincides with the Aron-Berner extension of $P$. In other words,
$AB(P) = \overline{P}\circ J_E$. Therefore,
\begin{align*}
\|AB(P)\|_{\Q(E'')} & = \| \overline{P} \circ J_E
\|_{\Q(E'')} \\
& \leq \| \overline{P} \|_{\Q((E)_\U)} \|J_E\|^n \\
& = \| P \|_{\Q(E)}.
\end{align*}
The other inequality is immediate.
\end{proof}

As a direct consequence we also obtain a symmetric version of the Embedding Lemma \cite[13.3]{DefFlo93}:

\begin{corollary}\label{embedding}
The natural maps
\begin{align*}
& \otimes^{n,s} J_E: \otimes^{n,s}_{\alpha} E \longrightarrow \otimes^{n,s}_{\alpha} (E)_\U \\
&  \otimes^{n,s}\K_E: \otimes^{n,s}_{\alpha} E \longrightarrow \otimes^{n,s}_{\alpha} E''
\end{align*}
are isometries for every finitely generated tensor norm $\alpha$.
\end{corollary}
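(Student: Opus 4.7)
The plan is to derive both isometries from Theorem \ref{isom AB maximales} and Corollary \ref{AB} by a routine Hahn-Banach argument. The underlying principle is that $\otimes^{n,s}T$ is an isometric embedding precisely when every polynomial $P \in (\widetilde{\otimes}^{n,s}_\alpha E)'$ admits a norm-preserving extension to $(\widetilde{\otimes}^{n,s}_\alpha Z)'$ that is compatible with $T$ in the sense that the extension evaluated at $Tx$ equals $P(x)$. The two previous results provide exactly such extensions: $\overline{P}$ for the embedding into $(E)_\U$ and $AB(P)$ for the embedding into $E''$.

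For the first embedding, the direction $\alpha(\otimes^{n,s} J_E(u);\, \otimes^{n,s}(E)_\U) \le \alpha(u;\, \otimes^{n,s}E)$ is immediate from the metric mapping property of $\alpha$ since $\|J_E\|=1$. For the reverse inequality, given $u = \sum_{k=1}^r x_k^{\otimes n} \in \otimes^{n,s}E$ and any polynomial $P \in (\widetilde{\otimes}^{n,s}_\alpha E)'$ with $\|P\|\le 1$, I would check the pairing identity
$$\langle \overline{P},\, \otimes^{n,s} J_E(u)\rangle = \sum_{k=1}^r \overline{P}(J_E x_k) = \sum_{k=1}^r P(x_k) = \langle P, u\rangle,$$
which holds because the iterated extension $\overline{P}$ applied to a constant family simply returns the original value. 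Combining this with Theorem \ref{isom AB maximales}, which guarantees $\|\overline{P}\|=\|P\|$, yields $|\langle P,u\rangle| \le \alpha(\otimes^{n,s} J_E(u);\, \otimes^{n,s}(E)_\U)$. A standard Hahn-Banach argument (the functionals on $\otimes^{n,s}_\alpha E$ coming from the unit ball of $(\widetilde{\otimes}^{n,s}_\alpha E)'$ norm the symmetric tensors) then upgrades this to $\alpha(u;\, \otimes^{n,s}E) \le \alpha(\otimes^{n,s} J_E(u);\, \otimes^{n,s}(E)_\U)$.

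The second map $\otimes^{n,s}\K_E$ is handled by the identical scheme, replacing $\overline{P}$ by the Aron-Berner extension $AB(P)$ and invoking Corollary \ref{AB} in place of Theorem \ref{isom AB maximales}; the compatibility identity here is simply $AB(P)(\K_E x) = P(x)$, which holds by construction. Both statements then pass from the algebraic tensor product to its $\alpha$-completion by continuity of the tensorized operators. There is no substantive obstacle: the entire argument is a standard duality reduction, and the only thing to verify is the compatibility of the extensions with the embeddings, which is transparent from the definitions of $\overline{P}$ and $AB(P)$.
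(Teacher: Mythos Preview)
Your proposal is correct and follows essentially the same approach as the paper's proof: the paper also uses the metric mapping property for $\|\otimes^{n,s}J_E\|\le 1$, invokes the pairing identity $\langle P,w\rangle=\langle \overline{P},\otimes^{n,s}J_E w\rangle$ together with Theorem~\ref{isom AB maximales} for the reverse inequality, and then says ``similarly'' for $\otimes^{n,s}\K_E$. Your write-up simply spells out the Hahn--Banach duality step and the compatibility identities in more detail than the paper does.
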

\begin{proof}
Clearly $\|\otimes^{n,s} J_E\| \leq 1$, and $\langle P, w \rangle = \langle \overline{P}, \otimes^{n,s} J_Ew \rangle$ gives the remaining inequality. Similarly we get that $\otimes^{n,s}\K_E$ is an isometry.
\end{proof}

\medskip

Now we turn our attention to minimal ideals of polynomials. In order to show that the Aron-Berner extension is also an isometry for minimal ideals, we need first the following simple result:

\begin{lemma}\label{aprox}
Let $T : E \to F$ be approximable operator. Then, $(T)_\U :(E)_\U \to (F)_\U$ is also approximable.
\end{lemma}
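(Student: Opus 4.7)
My plan is to use the fact that an approximable operator is by definition a norm-limit of finite-rank operators, together with the isometric nature of the ultrapower construction $S \mapsto (S)_\U$ that is recorded in the Preliminaries (i.e.\ $\|(S)_\U\| = \|S\|$). So the proof reduces to two observations: (a) the ultrapower of a finite-rank operator is finite-rank, and (b) norm limits are preserved when passing to ultrapowers.

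For (a), suppose $S \in \mathcal{L}(E,F)$ has finite rank, so that $S = \sum_{i=1}^k x_i' \otimes y_i$ for some $x_i' \in E'$ and $y_i \in F$. Given $(z_\i)_\U \in (E)_\U$, the family $\{x_i'(z_\i)\}_{\i \in I}$ is bounded, and since each $y_i$ is fixed one checks that
$$(S)_\U\bigl((z_\i)_\U\bigr) \;=\; \bigl(S z_\i\bigr)_\U \;=\; \sum_{i=1}^{k} \Bigl(\lim_{\i,\U} x_i'(z_\i)\Bigr)\, h_F(y_i),$$
using that $\lim_{\i,\U}\|x_i'(z_\i) y_i - c_i y_i\| = 0$ with $c_i := \lim_{\i,\U} x_i'(z_\i)$. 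Hence the range of $(S)_\U$ lies in the finite-dimensional subspace $\mathrm{span}\{h_F(y_1),\dots,h_F(y_k)\}$ of $(F)_\U$, so $(S)_\U$ is finite-rank.

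For (b), choose a sequence of finite-rank operators $T_m : E \to F$ with $\|T - T_m\| \to 0$. Linearity of the ultrapower construction gives $(T)_\U - (T_m)_\U = (T - T_m)_\U$, and then the norm identity $\|(T - T_m)_\U\| = \|T - T_m\|$ recalled in Section~\ref{prel} yields $\|(T)_\U - (T_m)_\U\| \to 0$. Since each $(T_m)_\U$ is finite-rank by (a), $(T)_\U$ is a norm-limit of finite-rank operators, hence approximable.

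There is no real obstacle here; the only point requiring a small verification is the computation in (a), which hinges on the elementary fact that scalar families $(c_\i)$ with $c_\i \to c$ along $\U$ satisfy $(c_\i y)_\U = c\, h_F(y)$ in $(F)_\U$. Everything else is formal.
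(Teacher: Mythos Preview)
Your proof is correct and follows essentially the same approach as the paper. The paper is merely more terse: it reduces to the rank-one case (rather than finite rank) and leaves the norm-limit step (your part (b)) implicit in the phrase ``it is sufficient to show\dots''; you have simply made both steps explicit.
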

\begin{proof}
It is sufficient to show that, if $T: E \to F$ is a rank-one
operator, $(T)_\U$ also is. Let $x' \in E'$ such that
$T(x)=x'(x)y$. If we denote $\lambda := \lim_{i,\U} x'(x_i)$ we
have easily that $(T)_\U (x_i)_\U = (x'(x_i)y)_\U = \lambda \;
h_Fy$.
\end{proof}

\begin{theorem}\label{isom AB minimales} Let $\Q$ be a minimal ideal. If $P \in {\Q}(E)$, then the iterated extension $\overline{P}$ belongs to ${\Q}((E)_\U)$ and
$$\|P\|_{{\Q}(E)}=\|\overline{P}\|_{{\Q}((E)_\U)}.$$
\end{theorem}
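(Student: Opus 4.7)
The strategy is to reduce to the maximal case (Theorem~\ref{isom AB maximales}) via the factorization description of minimal ideals. Given $P \in \Q(E)$ with $\Q$ minimal and $\varepsilon>0$, fix a factorization $P = Q \circ T$ with $T : E \to F$ approximable and $Q \in \Q(F)$ satisfying $\|Q\|_{\Q(F)} \|T\|^n \leq (1+\varepsilon)\|P\|_{\Q(E)}$. A direct computation from the definition of the iterated extension (using $A_P = A_Q\circ(T,\dots,T)$ for the associated symmetric multilinear forms) gives the functorial identity $\overline{P} = \overline{Q} \circ (T)_\U$. Lemma~\ref{aprox} says $(T)_\U$ is approximable, and since $Q \in \Q^{max}(F)$ with $\|Q\|_{\Q^{max}(F)} \leq \|Q\|_{\Q(F)}$, Theorem~\ref{isom AB maximales} applied to $\Q^{max}$ yields $\overline{Q} \in \Q^{max}((F)_\U)$ with $\|\overline{Q}\|_{\Q^{max}((F)_\U)} \leq \|Q\|_{\Q(F)}$.

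The core of the argument is to upgrade this to the statement that $\overline{Q} \circ (T)_\U$ actually lies in $\Q^{min}((E)_\U)=\Q((E)_\U)$ with the expected norm bound. I would approximate $T$ in operator norm by finite-rank operators $T_k$ with $G_k := \operatorname{range}(T_k) \in FIN(F)$. Unwinding definitions gives $\overline{Q \circ T_k} = (Q|_{G_k}) \circ (T_k)_\U$, which is a polynomial on a finite-dimensional space composed with a finite-rank (hence approximable) operator; thus $\overline{Q \circ T_k}\in \Q((E)_\U)$ with $\|\overline{Q \circ T_k}\|_{\Q((E)_\U)} \leq \|Q|_{G_k}\|_{\Q(G_k)} \|T_k\|^n \leq \|Q\|_{\Q(F)} \|T_k\|^n$, where the second inequality is the ideal property for the inclusion $G_k\hookrightarrow F$. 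Since $\overline{Q\circ T_k}\to \overline{P}$ pointwise, it suffices to show the sequence is Cauchy in $\Q((E)_\U)$, whence the limit belongs to $\Q((E)_\U)$ with the right norm.

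The main obstacle is this last Cauchy estimate. My plan is to expand the difference telescopically through the symmetric multilinear form $A_Q$: $Q(T_kx) - Q(T_{k'}x) = \sum_{j=1}^n A_Q(T_k x,\dots,T_k x,(T_k-T_{k'})x,T_{k'}x,\dots,T_{k'}x)$. Each summand depends on $x$ only through its images under $T_k$, $T_{k'}$, and $T_k-T_{k'}$, hence factors through the finite-dimensional space $G_k+G_{k'}$; using linearity of the iterated extension and the minimal-norm factorization bound on each piece, the $\Q((E)_\U)$-norm of each summand is controlled by a polarization-type constant times $\|Q\|_{\Q(F)}\,\|T_k-T_{k'}\|\,\max(\|T_k\|,\|T_{k'}\|)^{n-1}$, which tends to zero. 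Letting $\varepsilon \to 0$ then gives $\|\overline{P}\|_{\Q((E)_\U)}\leq \|P\|_{\Q(E)}$; the reverse inequality is immediate from the identity $P=\overline{P}\circ h_E$ together with $\|h_E\|=1$ and the ideal property.
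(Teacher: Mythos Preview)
Your argument is essentially correct, but it is considerably more laborious than the paper's. You and the paper start the same way: factor $P=Q\circ T$ with $T$ approximable, observe $\overline{P}=\overline{Q}\circ (T)_\U$, use Lemma~\ref{aprox} to see that $(T)_\U$ is approximable, and use Theorem~\ref{isom AB maximales} to get $\overline{Q}\in\Q^{max}((F)_\U)$ with $\|\overline{Q}\|_{\Q^{max}((F)_\U)}=\|Q\|_{\Q^{max}(F)}$. At this point the paper is already done: the factorization $\overline{P}=\overline{Q}\circ (T)_\U$ witnesses, \emph{by definition of the minimal kernel}, that $\overline{P}\in (\Q^{max})^{min}((E)_\U)$ with norm at most $\|Q\|_{\Q^{max}(F)}\|T\|^n$, and the paper simply invokes Floret's identity $\Q\overset{1}{=}(\Q^{max})^{min}$ \cite[3.4]{Flo01} to conclude. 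You instead bypass this identity and prove directly that $\overline{P}\in\Q((E)_\U)$ by approximating $T$ by finite-rank $T_k$, showing each $\overline{Q\circ T_k}$ lies in $\Q((E)_\U)$ via a finite-dimensional factorization, and then running a telescoping/polarization Cauchy estimate. That works (the polarization formula expresses each telescoped term as a linear combination of polynomials $Q\circ R_\epsilon$ with $R_\epsilon$ finite-rank, and rescaling $T_k-T_{k'}$ by its norm produces the needed factor $\|T_k-T_{k'}\|$), though the constant you actually get is of order $\frac{n^{n+1}}{n!}\max(\|T_k\|,\|T_{k'}\|,1)^n$ rather than the cleaner $\max(\|T_k\|,\|T_{k'}\|)^{n-1}$ you wrote; this is harmless for the Cauchy conclusion. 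What your route buys is self-containment: you avoid quoting $(\Q^{max})^{min}\overset{1}{=}\Q^{min}$. What the paper's route buys is a two-line proof once that identity is on the table.
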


\begin{proof}
Since $P \in {\Q}(E)\overset{1}
=\big(({\Q}^{max})^{min}\big)(E)$ (see
\cite[3.4]{Flo01}), given $\varepsilon > 0$ there exist a Banach space $F$, an
approximable operator $T : E \to F$ and a polynomial $Q \in
{\Q}^{max}(F)$ such that $P = Q\circ T$
%
and $ \|Q\|_{{\Q}^{max}(F)} \|T\|^n \leq
\|P\|_{{\Q}(E)} + \varepsilon$ (as in (\ref{factorizacion})). Notice that
$\overline{P} = \overline{Q} \circ (T)_\U$. By Theorem~\ref{isom
AB maximales} we have
$\|Q\|_{{\Q}^{max}(F)}=\|\overline{Q}\|_{{\Q}^{max}((F)_\U)}$.
Since by Lemma~\ref{aprox}, $(T)_\U$ is also approximable, we conclude:
$$\|\overline{P}\|_{{\Q}((E)_\U)} \leq   \|\overline{Q}\|_{{\Q}^{max}((F)_\U)} \|(T)_\U\|^n =  \|Q\|_{{\Q}^{max}(F)} \|T\|^n \leq \|P\|_{{\Q}(E)} +
\varepsilon.$$ The reverse inequality is immediate.
\end{proof}

Similarly as in Corollary~\ref{AB} we have:

\begin{corollary}\label{isom AB minimales}
Let $\Q$ be a minimal ideal. For $P \in {\Q}(E)$, its Aron-Berner extension
$AB(P)$ belongs to ${\Q}(E'')$ and
$$\|P\|_{{\Q}(E)}=\|AB\|_{{\Q}(E'')}.$$
\end{corollary}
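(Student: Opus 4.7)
My plan is to mirror exactly the argument used in Corollary~\ref{AB} for the maximal case, with Theorem~\ref{isom AB minimales} (the minimal analogue that was just established) playing the role that Theorem~\ref{isom AB maximales} played there. The key observation is that the identification of the Aron-Berner extension with the restriction of the iterated ultrapower extension (Proposition~\ref{propolindrayan}) is completely independent of which ideal we are working in: it is a fact about the extensions themselves, not about norms. So once we have an isometry statement for the iterated extension, passing to $E''$ via a local ultrapower is automatic from the ideal property.

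Concretely, I would fix a local ultrapower $(E)_\U$ of $E$ and the canonical embedding $J_E : E'' \to (E)_\U$, which has norm $1$. By Proposition~\ref{propolindrayan},
\[
AB(P) = \overline{P}\circ J_E.
\]
By Theorem~\ref{isom AB minimales}, $\overline{P}$ belongs to $\Q((E)_\U)$ with $\|\overline{P}\|_{\Q((E)_\U)} = \|P\|_{\Q(E)}$. Now the ideal property (condition (ii) in the definition of a polynomial ideal) applied to $\overline{P}$ and the operator $J_E$ yields $\overline{P}\circ J_E \in \Q(E'')$ together with the estimate
\[
\|AB(P)\|_{\Q(E'')} = \|\overline{P}\circ J_E\|_{\Q(E'')} \leq \|\overline{P}\|_{\Q((E)_\U)} \|J_E\|^n = \|P\|_{\Q(E)}.
\]
For the reverse inequality, since $P = AB(P)\circ \K_E$ and $\|\K_E\|=1$, the ideal property again gives $\|P\|_{\Q(E)} \leq \|AB(P)\|_{\Q(E'')}$.

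I do not expect a real obstacle here: both Proposition~\ref{propolindrayan} and the ideal axioms transfer verbatim from the maximal to the minimal setting, and the whole work is already done in Theorem~\ref{isom AB minimales}. The only conceptual point worth flagging is that we do not need to reprove anything about the factorization structure of $\Q^{min}$ at this stage: the factorization was used inside Theorem~\ref{isom AB minimales} to get the iterated-ultrapower isometry, and having that in hand reduces the Aron-Berner statement to the same two-line ideal-property argument as in the maximal case.
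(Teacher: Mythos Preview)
Your proposal is correct and matches the paper's approach exactly: the paper simply says ``Similarly as in Corollary~\ref{AB}'', i.e., apply Proposition~\ref{propolindrayan} together with the ideal property and the isometry for the iterated extension in the minimal case. Your write-up just spells this out.
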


\bigskip
We end this note with some comments on the extension of analytic functions associated to polynomial ideals.

The concept of holomorphy type was introduced by Nachbin in \cite{Nac69}
(see also \cite{Din71(holomorphy-types)}). The most natural holomorphy types can be seen as sequences of polynomial ideals $\Q=\{\Q_k\}_k$ ($\Q_k$ is an ideal of polynomials of degree $k$, $k=1,2,\dots$), where some kind of affinity between ideals of different degrees is necessary \cite{BotBraJunPel06,CarDimMurN}.  In \cite{CarDimMur07}, given such a sequence of polynomial ideals, an associated Fr\'{e}chet space of entire  functions is defined. In~\cite{Muro-tesis}, the corresponding definition for analytic functions defined on the unit ball of a Banach is given:
\begin{definition}
Let $\Q=\{\Q_k\}_k$  be a  sequence of polynomial ideals and $E$ be a
Banach space. The space of $\Q$-holomorphic functions of
bounded type on $B_E$ is defined as
$$
\hbu(B_E)=\left\{f\in H(B_E)\ : \frac{d^kf(0)}{k!} \in \Q_k(E) \textrm{
and } \lim_{k\rightarrow \infty}
\Big\|\frac{d^kf(0)}{k!}\Big\|_{\Q_k(E)}^{1/k}<1 \right\}.
$$
\end{definition}
An example of this kind of spaces is that of boundedly-integral holomorphic functions in the ball $H_{bI}$, studied in~\cite{DimGalMaeZal04}.

An immediate consequence of our results is the following: let $\Q=\{\Q_k\}_k$  be a  sequence of polynomial ideals, each $\Q_k$ being either maximal or minimal. If $E$ is a Banach space, then a holomorphic function  $f$ belongs to $\hbu(B_E)$ if and only if its Aron-Berner extension belongs to $\hbu(B_{E''})$.
Note that no coherence between ideals of different degrees is needed for this to hold.

\bigskip

\end{document}